\documentclass[a4paper,12pt]{article}

\usepackage{titlesec}
\usepackage{amsthm}
\usepackage{comment}
\usepackage{amsmath}
\usepackage{amssymb}
\usepackage{amsfonts}
\usepackage{mathrsfs} 
\usepackage{graphicx}
\usepackage[dvipsnames]{xcolor}
\usepackage{bm}
\usepackage{lmodern}

\newtheorem{theorem}{Theorem}
\newtheorem{lemma}[theorem]{Lemma}

\newtheorem{remark}[theorem]{Remark}

\newtheorem{example}[theorem]{Example}
\newtheorem{definition}[theorem]{Definition}

\newcommand{\C}{\mathbb{C}}
\newcommand{\A}{\mathbf{A}}
\newcommand{\B}{\mathbf{B}}
\newcommand{\R}{\mathbb{R}}
\newcommand{\NN}{\mathbb{N}}
\newcommand{\X}{\mathbf{X}}
\newcommand{\x}{\mathbf{x}}
\newcommand{\y}{\mathbf{y}}
\newcommand{\ab}{\mathbf{a}}
\newcommand{\bb}{\mathbf{b}}
\newcommand{\cb}{\mathbf{c}}
\newcommand{\db}{\mathbf{d}}
\newcommand{\di}{\text{d}}
\newcommand{\ib}{\mathbf{i}}

\DeclareMathOperator{\st}{s.t.}
\DeclareMathOperator{\re}{Re}
\DeclareMathOperator{\opt}{opt}
\DeclareMathOperator{\im}{Im}

\DeclareMathOperator{\Tr}{Tr}

\newcommand{\cQ}{\mathcal{Q}}
\newcommand{\cR}{\mathcal{R}}
\newcommand{\cP}{\mathcal{P}}
\newcommand{\cM}{\mathcal{M}}

\textheight235mm
\textwidth165mm
\voffset-10mm
\hoffset-12.5mm
\parindent0cm
\parskip2mm

\usepackage{braket}
\usepackage{hyperref}
\usepackage{cleveref}
\newcommand{\revise}[1]{{\color{black}#1}}

% The title should correspond to the one cited in \url{https://arxiv.org/pdf/2505.18580}
\title{\bf Approximating the order 2 quantum Wasserstein distance using the moment-SOS hierarchy}
\date{\today}
\author{Saroj Prasad Chhatoi\thanks{{\tt{spchhatoi@laas.fr}} LAAS-CNRS} \and Victor Magron\thanks{{\tt{vmagron@laas.fr}} LAAS-CNRS \& Institute of Mathematics from Toulouse, France}}
\begin{document}
\maketitle

\begin{abstract}
Optimal transport theory has recently been extended to quantum settings, where the density matrices generalize the probability measures. In this paper, we study the computational aspects of the order $2$ quantum Wasserstein distance, formulating it as an infinite dimensional linear program in the space of positive Borel measures supported on products of two unit spheres. 
This formulation is recognized as an instance of the Generalized Moment Problem, which enables us to use the moment-sums of squares hierarchy to provide a sequence of lower bounds converging to the distance. 
%Each level of the hierarchy corresponds to a finite dimensional semidefinite program. The hierarchy of lower bounds  to a global minimum under mild conditions. 
We illustrate our approach with numerical experiments. %for $1$ qubit systems. 
\end{abstract}

\section{Introduction}

Optimal transport theory has emerged as a fundamental tool in mathematics, providing a geometric and variational framework for comparing probability measures. In particular, the family of Wasserstein distances is central to this theory and have found widespread applications across 
%economics \cite{villani2008optimal}, 
mesh and image processing \cite{solomon2015convolutional}, as well as the study of nonlinear partial differential equations \cite{santambrogio2015ot,villani2008optimal}, among others. 

On the algorithmic side, several mature numerical schemes can be used. 
Besides the classical Kantorovich framework based on linear programming \cite{kantorovich1942}, one can rely on the fluid-dynamical Benamou–Brenier formulation solved by augmented Lagrangian methods \cite{benamou2000computational}, the Sinkhorn–Cuturi entropic scheme for large-scale problems \cite{cuturi2013sinkhorn}, and a host of multiscale, stochastic, and convex-dual variants that accelerate these solvers or exploit structure in the data; see the comprehensive survey of Peyré and Cuturi \cite{peyre2019computational} for details and further references. 

Over the past decade there has been intensive efforts to generalize this theory to quantum settings where the density matrices replace the probability measures. 
A natural strategy for defining quantum analogues of optimal transport is the notion of quantum couplings, i.e., joint quantum states with fixed marginals. 
\revise{We refer to \cite{feliciangeli2023non} for a basic recent literature review. }
In analogy to the classical Monge--Kantorovich framework, this leads to defining transport costs via a cost operator $C$, where the total cost of transporting a density matrix $\rho$ to $\nu$ is given by
\[
\inf_{\tau \in Q(\rho, \nu)} \Tr[C \tau]
\]
defined over the set $Q(\rho,\nu)$ of possible joint quantum states, also called \textit{quantum couplings}. 
However, this direct generalisation does not satisfy the desired Wasserstein distance properties. In \cite{nogoresultZhou}, the authors proved that there is no single cost operator that produces a sensible metric such as half the trace norm between states. Within this coupling-based paradigm, several alternative constructions have been proposed.  
The authors from \cite{DepalmaTrevisan2021} construct couplings as density operators on a bigger Hilbert space with marginals  $\rho^T$ and $\nu$. This formulation admits a canonical purification and naturally links couplings to quantum channels.  The cost matrix is built from a set of self-adjoint operators (quadratures), yielding a Wasserstein-type distance $W^S_2(\rho, \sigma)$ through second-order moments. While $W^S_2$ is not a metric in the strict sense, it satisfies modified triangle inequalities and approximates classical Wasserstein distances in semiclassical regimes. The authors in \cite{FriedlandCole2022} use as their cost operator the projector onto the antisymmetric subspace of the tensor-product Hilbert space formed by two identical copies of the system. This construction gives rise to a quantity $W_2^{\mathrm{asym}}$ that captures the antisymmetric content of joint couplings.
To recover data processing and monotonicity, a stabilized version $W_2^{\mathrm{asym},s}$ was introduced.  
\revise{As in the classical case, one can employ convex regularization \cite{caputo2024quantum} or von Neumann entropy regularization \cite{gerolin2023non} for unbalanced noncommutative optimal transport problems. }
For an overview of the various proposals for quantum Wasserstein distances and their properties, we refer the reader to the recent survey \cite{beatty2025wassersteindistancesquantumstructures}. 

Similar to the coupling method, a quantum analogue of the Benamou--Brenier formulation of the order $2$ Wasserstein distance was introduced in~\cite{CarlenMaas2017}. The definition is motivated by the classical property that the gradient flow of the relative entropy with respect to the Wasserstein metric gives rise to the Fokker--Planck equation. 

The approach presented in \cite{beatty2024} avoids cost matrices altogether by defining transport plans as collections of weighted pure-state pairs, and using any suitable metric on the projective Hilbert space to define a Wasserstein-type distance $W_p^d$. 

In this article we focus on the numerical implementation of $p=2,d=2$ case where the cost is quadratic in the pure states, and the quantum  Wasserstein distance problem leads to an optimization problem over separable states-- joint density \revise{operators} with fixed marginals which can be expressed as a mixture of product states. 
This structure establishes the connection to the Doherty–Parrilo–Spedalieri (DPS) hierarchy \cite{doherty2002distinguishing}. 
To test separability of quantum states, the DPS hierarchy provides a sequence of convex relaxations that impose symmetric extensions and positivity constraints that becomes increasingly tight at higher levels. 
Each relaxation bound can be computed by solving a \textit{semidefinite program}  \cite{vandenberghe1996semidefinite}, i.e., by minimizing a linear function under linear matrix inequality constraints. 

In \cite{gribling2022bounding}, the authors provide a moment perspective for this hierarchy and show that state separability can be cast as an instance of the \textit{Generalized Moment Problem} (GMP), which boils down to optimizing over the cone of positive Borel measures supported on the product of two unit spheres. 
\revise{We refer the interested reader to \cite{lasserre2008semidefinite} for a comprehensive introduction on the GMP, as well as for the practical approach based on semidefinite relaxations. }
%
%in terms of moment functionals: specifically, one considers linear functionals on commutative polynomials in complex variables, subject to algebraic constraints encoding Hermiticity, state normalization and marginal constraints. This perspective frames DPS hierarchy as a truncated generalized moment problem.  
%
Deeply inspired by this connection, we show that the order 2 quantum Wasserstein distance problem admits a similar GMP formulation, where one minimizes the squared Frobenius norm of the difference of rank-one projectors under marginal constraints. 
This allows us to use the \textit{moment-sums of squares} (abbreviated as moment-SOS)  hierarchy \cite{lasserre2001global,lasserre2009moments}, initially designed to solve polynomial optimization problems. 
Each level of the hierarchy yields a lower bound on the true transport cost and the convergence is guaranteed. The resulting semidefinite relaxations are computationally tractable for moderate state dimensions and hierarchy levels. 

A closely related development is the entanglement–marginal problem studied in \cite{navascues2021entanglement}. There the task is to decide whether a family of locally compatible reduced states can be extended to a separable global state. 
The authors provide a hierarchy of semidefinite relaxations that mirrors the DPS hierarchy but is tailored to marginal data: the constraints are imposed only on the neighbourhoods that actually appear in the instance. 
This sparsity-exploiting hierarchy reduces the size of the matrix variables involved in the relaxation. In translation-invariant lattice models, the memory requirements grow only linearly with the lattice width. %—while the hierarchy remains complete. 
As for sparse polynomial optimization \cite{lasserre2006convergent,magron2023sparse}, the hierarchy converges under additional chordality assumptions on the underlying sparsity pattern graph. 

\paragraph{Outline and contributions.} 
The rest of this note is organized as follows. 
In Section \ref{sec:prelim}, we recall standard results on polynomial optimization, and formulate the order 2 quantum Wasserstein distance. 
Then we characterize in Section~\ref{sec:measure_formulation} this distance as an infinite-dimensional linear program over positive Borel measures supported on the Cartesian product of two unit spheres. 
This linear program is a particular instance of the GMP, for which one can directly leverage the moment-SOS hierarchy to obtain a sequence of certified lower bounds of the global minimum with convergence guarantees. 
We explicitly prove convergence of this hierarchy of relaxations in Section~\ref{sec:sos_hierarchy}, and show that each relaxation has no duality gap.  
Eventually, we illustrate the practical use of this hierarchy on simple numerical examples and outline further research perspectives in Section~\ref{sec:numerics}. 

\section{Preliminaries and problem statement}
\label{sec:prelim}
\subsection{Preliminaries on polynomial optimization}
\label{sec:notationpop}
Let \( \x = (x_1, x_2, \ldots, x_n) \) and \( \overline{\x} = (\overline{x}_1, \overline{x}_2, \ldots, \overline{x}_n) \) denote vectors of complex variables and their conjugates, respectively. Let \( [\x, \overline{\x}] \) denote the vector of monomials of the form \( \x^{\alpha} \overline{\x}^{\beta} = \prod_i x_i^{\alpha_i} \prod_j \overline{x}_j^{\beta_j} \), where \( \alpha, \beta \in \mathbb{N}^n \) are multi-indices. Let \( [\x, \overline{\x}]_t \) be the subvector consisting of monomials with total degree at most \( t \), i.e., \( |\alpha| + |\beta| \leq t \). We define the space of polynomials with complex coefficients and degree at most $t$  as,
\begin{align*}
\C[\x,\overline{\x}]_t := \text{Span}\{m \: \vert\: m\in [\x,\overline{\x}]_t\} =\{ \Sigma_{m\in [\x,\overline{\x}]_t} a_m m:a_m \in \C\}.
\end{align*}
Given a polynomial $p = \sum_{\alpha,\beta} p_{\alpha,\beta} \x^{\alpha} \overline{\x}^{\beta}$, it conjugate is $\overline{p} = \sum_{\alpha,\beta} \overline{p}_{\alpha,\beta} \overline{\x}^{\alpha} \x^{\beta}$. 
A polynomial $p\in\C[\x,\overline{\x}]$ is called \textit{Hermitian} if $p=\overline{p}$. 
\revise{A \textit{Hermitian square} is any complex polynomial of the form $\overline{p} \, p$, and any finite sum of such elements is called a \textit{sum of Hermitian squares}. }
Let us denote by \( \Sigma[\x, \overline{\x}] \) the cone of sums of Hermitian squares of complex polynomials, and by \( \Sigma[\x, \overline{\x}]_{2t} \) its subset of elements with degree at most $2t$, for all $t \in \NN$. 

Given a positive $r \in \NN$, let \( \{g_1, \ldots, g_r\} \subseteq \mathbb{C}[\x, \overline{\x}] \) be a collection of Hermitian polynomials. %, i.e., each \( g_j \) satisfies \( g_j = \overline{g}_j \). 
We consider the basic closed semialgebraic set
\begin{equation}
    \X := \left\{ \x \in \C^n \,\middle|\, g_j(\x,\overline{\x}) \ge 0 \text{ for all } j = 1, \ldots, r \right\}.
\end{equation}
%The set of  is denoted by $\cP_+(\X)$. 

A particular subset of Hermitian polynomials that are nonnegative on \( \X \) is the quadratic module $\cQ(\X)$ associated with \( \X \), defined as
\[
\cQ(\X) := \left\{ \sum_{j=0}^r \sigma_j g_j \,\middle|\, \sigma_j \in \Sigma[\x, \overline{\x}] \right\}.
\]
Given a vector $\x \in \C^n$, its Euclidean norm is denoted by $\|\x\| = \sqrt{\sum_{i=1}^n x_i \overline{x}_i} = \sqrt{\sum_{i=1}^n |x_i|^2 }$. 
The quadratic module \( \cQ(\X) \) is said to be \textit{Archimedean} if there exists a scalar \( R > 0 \) such that $R - \|\x\|^2 \in \mathcal{Q}(\X)$. 

A linear functional $L: \C[\x,\overline{\x}] \to \C$ is called \textit{Hermitian} if $L(\overline{p})=\overline{L(p)}$, for all $p \in \C[\x,\overline{\x}]$. 
A Hermitian linear functional $L$ is called \textit{positive} if $L$ is nonnegative on $\Sigma[\x, \overline{\x}]$. 
 
For any $v \in \C^n$, let us denote by $L_{v} : \C[\x,\overline{\x}] \to \C$ the evaluation functional at $v$, defined by $L_v(p)=p(v,\overline{v})$, for every $p \in \C[\x,\overline{\x}]$. 
It is easy to see that $L_v$ is Hermitian and positive. 
Moreover $L_v$ has a representing atomic measure that is the Dirac measure concentrated at $v$, denoted by $\delta_v$.

Let us define $g_0:=1$. 
For a given integer \( t \geq \max_{1 \leq j \leq r} \left\lceil \deg(g_j)/2 \right\rceil \), the truncated quadratic module of order $2t$ associated to $\X$ is defined as
\[
\cQ(\X)_{2t} := \left\{ \sum_{j=0}^r \sigma_j g_j \;\middle|\; \sigma_j \in \Sigma[\x, \overline{\x}],\ \deg(\sigma_j g_j) \leq 2t \right\}.
\]
 
The following lemma, stated, e.g., in \cite[Lemma 7]{gribling2022bounding}, provides a uniform bound on the evaluation of monomials under linear functionals that are nonnegative on a given truncated quadratic module. 
\begin{lemma}\label{lem:moment_bound}
Let $\X$ be a compact semialgebraic set and assume that $R - \|\x\|^2 \in \cQ(\X)_2$. 
For any $t \in \NN$, let assume that $L_t : \C[\x,\overline{\x}]_{2t} \to \C$ is a linear functional being nonnegative on $\cQ(\X)_{2t}$. 
Then we have
\[
|L_t(w)| \leq  L_t(1) \cdot R^{|w|/2} \text{ for all } w \in [\x,\overline{\x}]_{2t}.
\]
Moreover, if 
\[
\sup_{t \in \NN} L_t(1) < \infty,
\]
then $\{L_t\}_{t \in \NN}$ has a point-wise converging subsequence in the space of linear functionals on $\C[\x,\overline{\x}]$. 
\end{lemma}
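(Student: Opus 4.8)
The plan is to establish the monomial bound first and then deduce the compactness statement by a diagonal-extraction argument. For the first part, fix $w \in [\x,\overline{\x}]_{2t}$ with $|w| = \deg(w) \le 2t$. The starting observation is that $w \overline{w} = |w|^2$ is a Hermitian square, hence $w\overline{w} \in \Sigma[\x,\overline{\x}]$, so positivity of $L_t$ gives $L_t(w\overline{w}) \ge 0$, and more importantly the sesquilinear form $(p,q) \mapsto L_t(p\overline{q})$ is positive semidefinite on polynomials of degree $\le t$. Applying the Cauchy--Schwarz inequality to this form with the pair $(w,1)$ — valid as long as $\deg(w) \le t$, which is the generic case after noticing $w$ splits as a product of two monomials of degree $\le t$ — yields $|L_t(w)|^2 \le L_t(w\overline{w})\, L_t(1)$. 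It then remains to bound $L_t(w\overline{w}) = L_t(|w|^2)$ by $L_t(1) R^{|w|}$.

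The key step is to control $L_t(|w|^2)$. Writing $|w|^2 = \prod_{i} |x_i|^{2\gamma_i}$ for the exponent vector $\gamma$ with $|\gamma| = |w|$, I would proceed by induction on $|\gamma|$ using the Archimedean-type inequality $R - \|\x\|^2 \in \cQ(\X)_2$. Concretely, from $R - \|\x\|^2 \in \cQ(\X)_2$ one gets, for any monomial $u$ with $\deg(u) \le t-1$, that $u\overline{u}\,(R - \|\x\|^2) \in \cQ(\X)_{2t}$, hence $L_t\big(u\overline{u}(R-\sum_i |x_i|^2)\big) \ge 0$, i.e. $\sum_i L_t(u\overline{u}|x_i|^2) \le R\, L_t(u\overline{u})$. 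Since each term on the left is nonnegative, in particular $L_t(u\overline{u}|x_i|^2) \le R\, L_t(u\overline{u})$ for every $i$. Iterating this, peeling off one factor $|x_i|^2$ at a time, gives $L_t(|w|^2) \le R^{|w|} L_t(1)$, and combined with Cauchy--Schwarz this delivers $|L_t(w)| \le L_t(1) R^{|w|/2}$. (The degenerate case where $\deg(w)$ equals $2t$ rather than being at most $t$ in each factor is handled by a direct application: $|L_t(w)| = |L_t(w \cdot \overline{1})| \le L_t(w\overline{w})^{1/2} L_t(1)^{1/2}$ still only needs $\deg(w) \le 2t$ for $w\overline{w}$ to have degree $\le 4t$; but $w\overline w$ may then exceed degree $2t$, so one instead writes $w = w_1 \overline{w_2}$ with $\deg w_1, \deg w_2 \le t$ and applies Cauchy--Schwarz to the pair $(w_1, w_2)$, reducing to bounding $L_t(w_1\overline{w_1})$ and $L_t(w_2 \overline{w_2})$ as above.)

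For the second part, assume $M := \sup_{t} L_t(1) < \infty$. By the bound just proved, for every fixed monomial $w \in [\x,\overline{\x}]$ the scalar sequence $\{L_t(w)\}_{t \ge \deg(w)/2}$ lies in the compact disk $\{z \in \C : |z| \le M R^{|w|/2}\}$, hence has a convergent subsequence. Since $[\x,\overline{\x}]$ is countable, a standard diagonal argument produces a single subsequence $\{t_k\}$ along which $L_{t_k}(w)$ converges for every monomial $w$ simultaneously; define $L(w) := \lim_k L_{t_k}(w)$ and extend $L$ to all of $\C[\x,\overline{\x}]$ by linearity. This $L$ is the desired point-wise limit functional, completing the proof.

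The main obstacle I anticipate is the bookkeeping in the first part: one must be careful that all intermediate polynomials ($u\overline{u}(R - \|\x\|^2)$, the Hermitian squares appearing in Cauchy--Schwarz, etc.) genuinely have degree at most $2t$ so that the hypothesis ``$L_t$ nonnegative on $\cQ(\X)_{2t}$'' actually applies — this is what forces the splitting of a general degree-$2t$ monomial $w$ into $w_1 \overline{w_2}$ with each factor of degree $\le t$, and what makes the induction peel factors off only while staying within the degree budget. The Archimedean inequality itself does the real work; everything else is degree accounting and a routine diagonal extraction.
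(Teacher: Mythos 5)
Your argument is correct and is essentially the standard proof of this lemma: positivity of the sesquilinear form $(p,q)\mapsto L_t(p\overline{q})$ plus Cauchy--Schwarz after splitting $w=w_1\overline{w_2}$ with $\deg w_i\le t$, the iterated Archimedean peeling $L_t(u\overline{u}\,|x_i|^2)\le R\,L_t(u\overline{u})$, and a diagonal extraction for the limit functional. The paper does not reprove this statement but cites \cite[Lemma 7]{gribling2022bounding}, whose proof follows the same route, so there is nothing to add beyond your (correctly flagged) degree bookkeeping.
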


As in \cite{gribling2022bounding}, we rely on the following complex analog results by Putinar \cite{putinar1993} and Tchakaloff \cite{tchakaloff1957cubature} to derive our moment perspective of the order 2 quantum Wasserstein distance problem, stated later on in Section \ref{sec:qwstatement}. 
For more details, we refer the interested reader to \cite[Appendix A.2]{gribling2022bounding}, where the authors indicate how to derive the complex analog statements from the real ones.

\begin{theorem}\cite[Theorem 8]{gribling2022bounding}
\label{thm:measure_representation}
Let $\X$ be a basic semialgebraic set, and assume that $\cQ(\X)$ is Archimedean. 
Let us assume that a Hermitian linear functional $L: \C[\x,\overline{\x}] \to \C$ is  nonnegative on $\cQ(\X)$. Then, the following holds. 
\begin{itemize}
\item[(i)] $L$ has a representing measure $\mu$ supported on $\X$, i.e., we have $L(p) = \int p d\mu$  for all  $p \in \mathbb{C}[\x,\overline{\x}]$.
\item[(ii)] For any $k \in \NN$, there exists a linear functional $\hat{L} : \C[\x,\overline{\x}] \to \C$, which coincides with $L$ on $\C[\x,\overline{\x}]_k$, and has a finite atomic representing measure $\hat{\mu}$ supported on $\X$, i.e., 
\[\hat{\mu} = \sum_{\ell=1}^K \lambda_{\ell} \delta_{v_{\ell}},\]
for some integer $K \geq 1$, scalars $\lambda_1,\dots,\lambda_K > 0$ and vectors $v_1,\dots,v_K \in \X$. 
\end{itemize}
\end{theorem}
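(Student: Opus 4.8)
The plan is to derive both items from their classical real counterparts by identifying $\C^n$ with $\R^{2n}$. Write each complex variable as $x_j = a_j + \ib b_j$ with $a_j,b_j \in \R$, so that $\overline{x}_j = a_j - \ib b_j$, and let $(\ab,\bb)\in\R^{2n}$ be the corresponding real coordinates. Under this identification a polynomial $p \in \C[\x,\overline{\x}]$ is Hermitian, i.e.\ $p=\overline{p}$, precisely when it has real coefficients as a polynomial in $(\ab,\bb)$, and its total degree is unchanged. Since $q\overline{q} = (\re q)^2 + (\im q)^2$ with $\re q,\im q$ Hermitian, and conversely $h^2 = |h|^2$ for every Hermitian $h$, the cone $\Sigma[\x,\overline{\x}]$ of Hermitian sums of squares coincides with the real SOS cone $\Sigma[\ab,\bb]$. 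Consequently $\X$ is a basic closed semialgebraic subset of $\R^{2n}$, the quadratic module $\cQ(\X)$ equals the real quadratic module generated by $g_0=1,g_1,\dots,g_r$ in $\R[\ab,\bb]$, and Archimedeanity of $\cQ(\X)$ is exactly the real Archimedean condition via $R-\norm{\x}^2 = R-\sum_j(a_j^2+b_j^2)$.

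The second step is to transport the functional. A Hermitian $L:\C[\x,\overline{\x}]\to\C$ is real-valued on Hermitian polynomials, hence restricts to a real linear functional $\tilde{L}$ on $\R[\ab,\bb]$, and is recovered from it by $\C$-linearity through $L(p) = \tilde{L}(\re p) + \ib\,\tilde{L}(\im p)$, where $\re p = (p+\overline{p})/2$ and $\im p = (p-\overline{p})/(2\ib)$ are Hermitian of the same degree as $p$; moreover nonnegativity of $L$ on $\cQ(\X)$ transfers to nonnegativity of $\tilde{L}$ on the real quadratic module. For (i) I would then invoke Putinar's representation theorem \cite{putinar1993} in $\R^{2n}$: since the real quadratic module is Archimedean and $\tilde{L}$ is nonnegative on it, $\tilde{L}$ admits a representing Borel measure $\mu$ supported on $\X$. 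For any $p\in\C[\x,\overline{\x}]$ one then gets $L(p) = \tilde{L}(\re p) + \ib\,\tilde{L}(\im p) = \int \re p\,\di\mu + \ib\int \im p\,\di\mu = \int p\,\di\mu$, which is (i) after identifying $\R^{2n}$ back with $\C^n$.

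For (ii) I would apply Tchakaloff's theorem \cite{tchakaloff1957cubature}, in the version valid for a Borel measure of compact support that keeps the atoms inside the support, to $\mu$ and to the finite-dimensional space $\R[\ab,\bb]_k$: there exist $K\ge 1$, weights $\lambda_1,\dots,\lambda_K>0$ and points $v_1,\dots,v_K\in\operatorname{supp}\mu\subseteq\X$ such that $\hat\mu:=\sum_{\ell=1}^K \lambda_\ell\delta_{v_\ell}$ matches all moments of $\mu$ up to degree $k$. Setting $\hat{L}(p):=\int p\,\di\hat\mu$ yields a Hermitian functional, and since every monomial $\x^\alpha\overline{\x}^\beta$ with $|\alpha|+|\beta|\le k$ is a $\C$-combination of its Hermitian real and imaginary parts (both of degree $\le k$), $\hat{L}$ agrees with $L$ on all of $\C[\x,\overline{\x}]_k$, which is (ii). The substantive analytic content, Putinar's representation theorem, is used as a black box, so the remaining work is purely organizational: one must verify carefully that the Hermitian SOS cone is literally the real SOS cone under the real/imaginary decomposition — so that the complex and real quadratic modules, and their Archimedean property, genuinely coincide — and that the chosen form of Tchakaloff's theorem places the atoms in $\operatorname{supp}\mu$ (hence in $\X$) rather than merely in its convex hull. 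This is precisely the reduction performed in \cite[Appendix A.2]{gribling2022bounding}, which we follow.
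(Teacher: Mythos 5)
Your proposal is correct and follows exactly the route the paper intends: the paper does not prove Theorem~\ref{thm:measure_representation} itself but cites \cite[Theorem 8]{gribling2022bounding} and points to \cite[Appendix A.2]{gribling2022bounding} for the reduction of the complex statement to the real one, which is precisely the argument you reconstruct (identify $\C^n$ with $\R^{2n}$, check that the Hermitian SOS cone and quadratic module coincide with their real counterparts, then apply Putinar for (i) and the support-preserving form of Tchakaloff for (ii)). The two points you flag as needing care --- the coincidence of the complex and real quadratic modules, and the placement of Tchakaloff's atoms inside $\operatorname{supp}\mu\subseteq\X$ --- are indeed the only substantive checks, and your handling of both is sound.
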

\subsection{Order 2 quantum Wasserstein distance}
\label{sec:qwstatement}
Given $n \in \NN$ and a matrix $\rho = (\rho_{ij})_{i,j=1}^n \in \C^{n \times n}$, the trace of $\rho$ is denoted by $\Tr(\rho):=\sum_{i=1}^n \rho_{ii}$, and its transpose conjugate is given by $\rho^* := (\overline{\rho_{ji}})_{i,j=1}^n$. 
Such a matrix is called \textit{Hermitian} if $\rho^* = \rho$. 
If in addition $\rho$ has nonnegative eigenvalues, then it is called \textit{positive semidefinite}. 
A \textit{normalized quantum state} $\rho \in \C^{n \times n}$ is  a Hermitian positive semidefinite matrix with \( \Tr(\rho) = 1 \). We denote the set of such states by $\mathcal{D}(\C^n)$. 
A normalized quantum state is called \textit{pure} if it has rank 1, and \textit{mixed} otherwise. 
\begin{definition}[\cite{beatty2024}]
Let \( n \in \mathbb{N} \), and let \( \rho, \nu \in \mathcal{D}(\C^n) \). A quantum transport plan between \( \rho \) and \( \nu \) is any finite set of triples \( \{ (\omega_\ell, u_\ell, v_\ell)\}_\ell  \) such that
\[
\sum_\ell \omega_\ell u_\ell u_\ell^* = \rho, \quad \sum_\ell \omega_\ell v_\ell v_\ell^* = \nu,
\]
where \( \omega_\ell > 0 \), \( \sum_\ell \omega_\ell = 1 \), and \( u_\ell, v_\ell \in \mathbb{C}^n \) with \( \|u_\ell\| = \|v_\ell\| = 1 \). 
The set of all such quantum transport plans is denoted by \( Q(\rho, \nu) \).
\end{definition}
We now consider two examples to illustrate this latter concept of quantum transport plan.

\begin{example}[Pure to pure]\label{ex:pure_pure}
Let us consider the pure states $\rho = \begin{pmatrix}
        1 & 0\\
        0 & 0
        \end{pmatrix}$ and $\nu=\frac{1}{2} \begin{pmatrix}
        1 & 1\\
        1 & 1
        \end{pmatrix}$. 
The set of quantum transport plans $Q(\rho,\nu)$ is the singleton   \[
        Q(\rho,\nu) = \left\{ \left(1, \begin{pmatrix} 1 \\ 0 \end{pmatrix}, \frac{1}{\sqrt{2}} \begin{pmatrix} 1 \\ 1 \end{pmatrix} \right) \right\}.
        \]
    \end{example}
    \begin{example}[Mixed to mixed]\label{ex:mixed_mixed}
Now let us consider the two mixed quantum states 
    $\rho = \begin{pmatrix}
    \frac{3}{4} & 0\\ 
    0 & \frac{1}{4}
    \end{pmatrix}$ and $\nu=\begin{pmatrix}
    \frac{1}{2} & 0 \\ 
    0 & \frac{1}{2}
    \end{pmatrix}$.
    One feasible transport plan is 
    \[
    \Big\{\;\left(\:\frac{3}{4},\begin{pmatrix}1\\0\end{pmatrix},\begin{pmatrix}0\\1\end{pmatrix}\right)\:,\:\left(\:\frac{1}{4},\frac{1}{\sqrt{2}}\begin{pmatrix}1\\1\end{pmatrix},\frac{1}{\sqrt{2}}\begin{pmatrix}1\\-1\end{pmatrix}\right)\;\Big\}.
    \] 
    \end{example}
    
\if{
\begin{remark} 
In contrast to the pure state case, the set \( Q(\rho, \nu) \) is not known explicitly in general. 
\end{remark}
}\fi

\begin{definition}[\cite{beatty2024}]\label{wasserstein_problem}
Given a quantum transport plan \( e = \{ (\omega_\ell, u_\ell, v_\ell)\}_\ell  \in Q(\rho, \nu) \), the associated order $2$ quantum transport cost is defined as
\[
T_2(e) = \sum_\ell \omega_\ell \Tr\left[ (u_\ell u_\ell^*- v_\ell  v_\ell^*)\overline{(u_\ell u_\ell^*- v_\ell v_\ell^*)}\right].
\]
The order $2$ quantum Wasserstein distance between \( \rho \) and \( \nu \) is given by
\[
W_2(\rho, \nu) := \left( \inf_{e \in Q(\rho, \nu)} T_2(e) \right)^{1/2}.
\]
\end{definition}

\begin{remark}
\revise{When the source and target are pure states, there is only one feasible transport plan.} 
In the pure to pure case from Example~\ref{ex:pure_pure}, the transport plan is unique and yields an exact evaluation of the Wasserstein distance \( W_2(\rho,\nu) = 1 \). 
This is in contrast to the mixed to mixed case from Example~\ref{ex:mixed_mixed}, where the provided feasible plan is not necessarily optimal and gives only an upper bound \( 1 \geq W_2(\rho,\nu) \). 
%The exact value of \( W_2(\rho,\nu) \) remains unknown in general for mixed states.
\end{remark}
\section{Main results}
\subsection{An instance of the Generalized Moment Problem}
\label{sec:measure_formulation}

In a very similar way as in \cite[Section 5.2]{gribling2022bounding}, we first show that the order 2 quantum Wasserstein distance can be formulated as a GMP instance. 
%We first show that the order$-2$ Quantum Wasserstein distance can be formulated as an infinite-dimensional linear problem over probability measures. 
Let \( \X = \{ (\x, \y) \in \mathbb{C}^{2n} : \|\x\| = \|\y\| = 1 \} \) be the complex \textit{bi-sphere}, i.e., the Cartesian product of two complex unit spheres, and let \( \mathcal{M}(\X) \) denote the set of finite positive Borel measures supported on \( \X \). 
Let us \revise{denote} \( f(\x,\overline{\x},\y,\overline{\y}) = \mathrm{Tr}\left[(\x\x^* - \y\y^*)\,(\overline{\x\x^* - \y\y^*})\right] \in \C[\x,\overline{\x}, \y, \overline{\y}] \) and consider the following GMP instance:
\begin{equation}
\label{eq:measC}
\begin{aligned}
P(\rho,\nu) = \inf_{\mu \in \cM(\X)} \quad  & \int_{\X} f \di \, \mu   \\	
\st 
\quad & \int_{\X} \x \x^* \di \, \mu = \rho, \\
\quad & \int_{\X} \y \y^* \di \, \mu = \nu.
\end{aligned}
\end{equation}
The dual of \eqref{eq:measC} is given by:
\begin{equation}
\label{eq:dualC}
\begin{aligned}
D(\rho,\nu) = \sup_{\Lambda,\Gamma \in \C^{n \times n}} \quad & \Tr (\rho \Lambda + \nu \Gamma) \\
\mathrm{s.t.} \quad & f - \x^* \Lambda \x - \y^* \Gamma \y \geq 0 \quad \text{on } \X, \\
& \Lambda^* = \Lambda, \quad \Gamma^* = \Gamma.
\end{aligned}
\end{equation}

\begin{theorem}
\label{thm:qw}
Let $\rho, \nu \in \mathcal{D}(\C^n)$ and $f$ as above. 
The value of $P(\rho,\nu)$ is attained, and  $W^2_2(\rho,\nu) = P(\rho,\nu) = D(\rho,\nu)$. 
\end{theorem}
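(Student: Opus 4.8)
The plan is to prove the three assertions separately: the identity $P(\rho,\nu)=W_2^2(\rho,\nu)$, the attainment of the infimum defining $P$, and the absence of a duality gap. First I would establish $P=W_2^2$ by two inclusions. Any quantum transport plan $e=\{(\omega_\ell,u_\ell,v_\ell)\}_\ell\in Q(\rho,\nu)$ gives the atomic measure $\mu_e=\sum_\ell\omega_\ell\,\delta_{(u_\ell,v_\ell)}$, which lies in $\cM(\X)$ since $\|u_\ell\|=\|v_\ell\|=1$, is feasible for \eqref{eq:measC} because its two marginal integrals are exactly $\rho$ and $\nu$, and satisfies $\int_\X f\,\di\mu_e=T_2(e)$; hence $P\le W_2^2$, and in particular \eqref{eq:measC} is feasible since $Q(\rho,\nu)\ne\emptyset$ (e.g.\ the product of spectral decompositions of $\rho$ and $\nu$). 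Conversely, let $\mu\in\cM(\X)$ be feasible. Taking traces in the marginal constraints gives $\int_\X 1\,\di\mu=\Tr(\rho)=1$, so $\mu$ is a probability measure, and the functional $p\mapsto\int_\X p\,\di\mu$ is Hermitian and nonnegative on $\cQ(\X)$; moreover $\cQ(\X)$ is Archimedean, since $2-\|\x\|^2-\|\y\|^2=(1-\|\x\|^2)+(1-\|\y\|^2)\in\cQ(\X)$. Applying Theorem~\ref{thm:measure_representation}(ii) with $k=\deg f=4$ produces a finitely atomic measure $\hat\mu=\sum_{\ell=1}^K\lambda_\ell\,\delta_{(u_\ell,v_\ell)}$ supported on $\X$ that agrees with $\mu$ on all polynomials of degree at most $4$; it therefore has the same marginals $\rho,\nu$, the same total mass $1$, and $\int_\X f\,\di\hat\mu=\int_\X f\,\di\mu$, so $\{(\lambda_\ell,u_\ell,v_\ell)\}_\ell\in Q(\rho,\nu)$ has cost $\int_\X f\,\di\mu$, whence $W_2^2\le P$.

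Next I would prove attainment by weak-$\ast$ compactness: a minimizing sequence for \eqref{eq:measC} consists, by the previous paragraph, of Borel probability measures on the compact metric space $\X$, so Banach--Alaoglu yields a subsequence converging weakly-$\ast$ to some $\mu_\infty\in\cM(\X)$, and testing against the continuous functions $f$ and the entries of $\x\x^*$ and $\y\y^*$ shows that $\mu_\infty$ is feasible with $\int_\X f\,\di\mu_\infty=P(\rho,\nu)$. Weak duality $P\ge D$ is then a one-line computation: for feasible $\mu$ and feasible $(\Lambda,\Gamma)$, writing $f=(f-\x^*\Lambda\x-\y^*\Gamma\y)+\x^*\Lambda\x+\y^*\Gamma\y$ and integrating against $\mu$, the first summand contributes a nonnegative amount (its integrand is nonnegative on $\mathrm{supp}\,\mu\subseteq\X$) while $\int_\X\x^*\Lambda\x\,\di\mu=\Tr(\Lambda\rho)$ and $\int_\X\y^*\Gamma\y\,\di\mu=\Tr(\Gamma\nu)$, giving $\int_\X f\,\di\mu\ge\Tr(\rho\Lambda+\nu\Gamma)$.

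For the reverse inequality $P\le D$ I would invoke the standard duality theorem for conic linear programs over the dual pair $\bigl(C(\X),\cM(\X)\bigr)$, in which \eqref{eq:measC} is the minimization over the cone of nonnegative measures and \eqref{eq:dualC} is its dual over the cone of nonnegative continuous functions on $\X$. The crucial observation is that \eqref{eq:dualC} is \emph{strictly} feasible: taking $\Lambda=\Gamma=-c\,I_n$ with $c>0$ large enough that $f+2c>0$ on $\X$ (possible since $f$ is continuous on the compact set $\X$), the dual slack $f-\x^*\Lambda\x-\y^*\Gamma\y$ equals $f+2c$ on $\X$, hence is uniformly positive and lies in the interior of the cone of nonnegative continuous functions. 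Combined with the feasibility and finiteness of \eqref{eq:measC}, this yields $P(\rho,\nu)=D(\rho,\nu)$ and re-confirms that the primal optimum is attained. I expect this last step to be the main obstacle: one has to set up the infinite-dimensional primal--dual pair with the appropriate topology ($\cM(\X)$ being $C(\X)^*$ with the weak-$\ast$ topology) and verify that the explicit candidate genuinely lies in the interior of the dual cone, so that the abstract no-gap theorem applies; the remaining work reduces to Tchakaloff's theorem (Theorem~\ref{thm:measure_representation}) and weak-$\ast$ compactness of probability measures on a compact set.
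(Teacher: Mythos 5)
Your proof is correct, and it overlaps with the paper's on most steps: the inequality $P(\rho,\nu)\le W_2^2(\rho,\nu)$ via the atomic measure attached to a transport plan, the observation that every feasible measure is a probability measure (taking traces in a marginal constraint), attainment via weak-$\star$ compactness, and the use of Tchakaloff's theorem (Theorem~\ref{thm:measure_representation}(ii) with $k=4$) to convert measures back into transport plans. Two points genuinely differ. First, you apply Tchakaloff to an \emph{arbitrary} feasible measure, which yields $W_2^2(\rho,\nu)\le\int_{\X}f\,\di\mu$ for every feasible $\mu$ and hence $W_2^2\le P$ directly; the paper instead first proves attainment and only then discretizes the optimizer. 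Your ordering is slightly cleaner because it decouples the identity $P=W_2^2$ from the attainment question. Second, and more substantively, for the absence of a duality gap the paper verifies the closedness hypothesis of Barvinok's Theorem IV.7.2 (weak-$\star$ compactness of the set of feasible probability measures together with weak-$\star$ continuity of the cost and constraint maps makes the relevant image set closed), whereas you verify a Slater-type interior-point condition for \eqref{eq:dualC}: with $\Lambda=\Gamma=-cI$ the slack equals $f+2c$ on $\X$, which is uniformly positive on a compact set and hence interior to the cone of nonnegative continuous functions in the sup norm. Both sufficient conditions are valid here; yours re-derives primal attainment as a by-product and is a standard route for GMPs over compact sets, but it does require quoting the correct infinite-dimensional Lagrangian duality theorem with multipliers in $C(\X)^*=\cM(\X)$ (e.g., the strict-feasibility/no-gap results of Bonnans--Shapiro or Anderson--Nash), a point you rightly flag as the delicate step. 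The paper's passage to the real reformulation \eqref{eq:measR}--\eqref{eq:dualR} is only a device to invoke real-coefficient versions of these theorems and does not constitute a mathematical difference.
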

\begin{proof}
We first show that $W^2_2(\rho,\nu)\ge P(\rho,\nu)$. 
One can associate to any quantum transport plan $\{(\omega_{\ell},u_{\ell},v_{\ell})\}_{\ell} \in Q(\rho,\nu)$ a measure $\mu\in \cM(\X)$ given by $\mu = \sum_{\ell} \omega_{\ell} \delta_{(u_{\ell},v_{\ell})}$. 
The measure $\mu$ satisfies the equality constraints from \eqref{eq:measC} since 
\begin{gather*}
\int_{\X} \x\x^* \di \, \mu  = \sum_{\ell} \omega_{\ell} u_{\ell} u_{\ell}^* = \rho, \quad
\int_{\X}  \y\y^* \di \, \mu = \sum_{\ell} \omega_{\ell} v_{\ell} v_{\ell}^*  = \nu.
\end{gather*}
Similarly, the cost function is:
\begin{gather*}
    \int_{\X} f \di \mu = \sum_{\ell} \lambda_{\ell} \Tr[(u_{\ell} u_{\ell}^* - v_{\ell} v_{\ell}^*)(\overline{u_{\ell} u_{\ell}^* - v_{\ell} v_{\ell}^*})].
\end{gather*}
Hence, any feasible quantum transport plan yields a feasible measure  for $P(\rho,\nu)$ and we obtain $W_2^2(\rho,\nu)\ge P(\rho,\nu)$. 

Before proving the other direction, we provide an equivalent formulation of \eqref{eq:measC}-\eqref{eq:dualC} involving real Borel measures and real polynomials. 
Then, we will show that the inf is attained in the real GMP instance. 

The real reformulation is obtained similarly to \cite[Appendix A]{gribling2022bounding}, where the authors provide detailed explanation to transform polynomials in $\C[\x,\overline{\x},\y,\overline{\y}]$ into polynomials in $\R[\ab,\bb,\cb,\db]$, after applying the change of variables $\x = \ab + \ib \, \bb$, $\y = \cb + \ib \, \db$. 
For any $p \in \C[\x,\overline{\x},\y,\overline{\y}]$, let us define $ p_{\re} (\ab,\bb,\cb,\db) := \re(p(\ab+\ib \, \bb, \ab-\ib \, \bb, \cb+\ib \, \db, \cb-\ib \, \db ))$. 
For a given Hermitian $p$ one has $p(\x,\overline{\x},\y,\overline{\y}) = p_{\re} (\ab,\bb,\cb,\db)$, so
$f(\x,\overline{\x},\y,\overline{\y}) = f_{\re} (\ab,\bb,\cb,\db)$. 
The real analog of $\X$ is the set $\X_{\re} = \{ (\ab,\bb,\cb,\db) \in \R^{4n} : 1 - \|\ab\|^2 - \|\bb\|^2 = 1 - \|\cb\|^2 - \|\db\|^2 = 0  \}$. \\
In addition, the maps $\re(\cdot)$ and $\im(\cdot)$ act entrywise for vectors and matrices with polynomial entries in  $\C[\x,\overline{\x},\y,\overline{\y}]$. 
In particular one has $\re(\x \x^*) = \ab \ab^T + \bb \bb^T$ and $\im(\x \x^*) = \bb \ab^T - \ab \bb^T$. 
Then one easily shows that
\begin{equation}
\label{eq:measR}
\begin{aligned}
P(\rho,\nu) = \inf_{\mu^{\R}\in \cM(\X_{\re})} \quad  & \int_{\X_{\re}} f_{\re} \di \mu^{\R}   \\	
\text{s.t.}
\quad & \int_{\X_{\re}} (\ab \ab^T + \bb \bb^T) \di \mu^{\R} = \re(\rho), \\
\quad & \int_{\X_{\re}} (\bb \ab^T - \ab \bb^T) \di \mu^{\R} = \im(\rho), \\
\quad & \int_{\X_{\re}} (\cb \cb^T + \db \db^T) \di \mu^{\R} = \re(\nu), \\
\quad & \int_{\X_{\re}} (\db \cb^T - \cb \db^T) \di \mu^{\R} = \im(\nu), \\
\end{aligned}
\end{equation}
and
\begin{equation}
\label{eq:dualR}
\begin{aligned}
D(\rho,\nu) = \sup_{\Lambda_i,\Gamma_i \in \R^{n \times n}} \quad  & \Tr \left[\re(\rho) \Lambda_1 + \re(\nu) \Gamma_1 + \im(\rho) \Lambda_2 + \im(\nu) \Gamma_2\right] \\	
\text{s.t.}
%\quad & f_{\re} - \Tr ((\ab \ab^T + \bb \bb^T)\Gamma_1 + (\cb \cb^T + \db \db^T)\Lambda_1 \\
%\quad & + (\bb \ab^T - \ab \bb^T)\Gamma_2 + (\db \cb^T - \cb \db^T)\Lambda_2) \geq 0 \text{ on } \X_{\re}, \\
\quad & f_{\re} \geq \Tr [(\ab \ab^T + \bb \bb^T)\Lambda_1 + (\cb \cb^T + \db \db^T)\Gamma_1 
 + (\bb \ab^T - \ab \bb^T)\Lambda_2 \\
\quad & \quad \quad \quad + (\db \cb^T - \cb \db^T)\Gamma_2 ] \text{ on } \X_{\re}, \\
\quad & \Lambda_1^T=\Lambda_1, \quad \Lambda_2^T=-\Lambda_2, \quad \Gamma_1^T=\Gamma_1, \quad \Gamma_2^T=-\Gamma_2. 
\end{aligned}
\end{equation}

We show that the inf is attained in \eqref{eq:measR}. 
The proof is readily the same as the one from, e.g., \cite[Lemma 2.2]{gamertsfelder2025effective}, and we provide a brief sketch:

\begin{itemize}
\item The GMP instance \eqref{eq:measR} has a non-empty feasible set because $Q(\rho,\nu)$ is non-empty: for any spectral decomposition $\rho = \sum_{\ell} \omega_{\ell} u_{\ell} u_{\ell}^*$ and $\nu = \sum_{j} \omega'_{j} v_j v_j^*$, one can obtain a finite sequence $\{(\omega_{\ell} \omega'_{j}, u_{\ell}, v_{j})\}_{\ell,j} \in Q(\rho,\nu)$.
%
%\item 
%
%
%\spc{we still need boundedness of total variation norm  of the measures for Banach alaoglu theorem. I guess the argument later of $\mu$ being probability measure should be enough}
\item The Banach-Alaoglu theorem (see, e.g., \cite[Theorem~3.5.16]{ash2014measure}) implies that the feasible set of \eqref{eq:measR} is compact in the weak-$\star$ topology. 
This is due to the fact that each solution $\mu$ of \eqref{eq:measC} must be a probability measure. 
Indeed, combining for instance the first equality constraint together with the fact that $\mu$ is supported on $\X$ yields
\[
\revise{
\int_{\X} \di \mu = 
\int_{\X} \|\x\|^2 \di \mu = 
\int_{\X} \Tr(\x \x^*) \di \mu  = 
\Tr(\rho)=1.}\] 
\end{itemize} 
Hence, the inf of the real GMP instance \eqref{eq:measR} is attained at a finite atomic measure, as a consequence of Tchakaloff's theorem \cite{tchakaloff1957cubature}. 
Equivalently, the inf of the complex GMP instance  \eqref{eq:measC} is also attained at a probability measure $\mu = \sum_{\ell} \omega_{\ell} \delta_{(u_{\ell},v_{\ell})}$.
This proves that $\{(\omega_{\ell},u_{\ell},v_{\ell})\}_{\ell} \in Q(\rho,\nu)$, and thus $W_2^2(\rho,\nu)\le P(\rho,\nu)$. 
\revise{Consequently, any optimal solution to the GMP instance \eqref{eq:measC} yields an optimal transport plan for the order 2 quantum Wasserstein distance problem.} 

%The proof that there is no duality gap between \eqref{eq:measC} and \eqref{eq:dualC}, or equivalently between \eqref{eq:measR} and \eqref{eq:dualR}, 
%is also routine: it is derived exactly  as in, e.g., \cite[Proposition 2.5]{gamertsfelder2025effective} and follows from \cite[Theorem 3.22]{anderson1987linear}; see also \cite[Theorem C.20]{lasserre2009moments}. 

Eventually, we prove that there is no duality gap between \eqref{eq:measC} and \eqref{eq:dualC}, or equivalently between \eqref{eq:measR} and \eqref{eq:dualR}. 
Let $\cP(\X_{\re})$ be the set of probability measures on $\X_{\re}$. 
Let us define a linear operator  $A : \cP(\X_{\re}) \to (\R^{2n \times 2n})^4$ as follows 
\[
A \mu := 
\begin{pmatrix}
& \int_{\X_{\re}}(\ab \ab^T + \bb \bb^T) \di \mu^{\R}  \\
&  \int_{\X_{\re}} (\bb \ab^T - \ab \bb^T) \di \mu^{\R} \\
&  \int_{\X_{\re}} (\cb \cb^T + \db \db^T) \di \mu^{\R} \\
&  \int_{\X_{\re}} (\db \cb^T - \cb \db^T) \di \mu^{\R}.
 \end{pmatrix}
 \]
The linear operator \(A \) is weak-$\star$ continuous and  the map \( \mu^{\R} \mapsto \int_{\X_{\re}} f_{\re} \di  \mu^{\R} \) is weak-$\star$ continuous as $f_{\re}$ is continuous on $\X_{\re}$. 
Thus the set \[
   \Bigl\{\bigl(\,\textstyle\int_{\X_{\re}}f_{\re} \di  \mu^{\R},\;
                       A \mu^{\R} \bigr)
          :\mu^{\R} \in \cP (\X_{\re})\Bigr\}
    \] is weak-$\star$ closed.  
    So the hypotheses of  \cite[Theorem 7.2, Chapter 4]{Barvinok2002ACI} are satisfied and thus we have no duality gap between \eqref{eq:measC} and \eqref{eq:dualC}.
\end{proof}
\revise{
Eventually, we show that the dual value $D(\rho,\nu)$ is attained if both source and target are positive definite quantum states. 
\begin{theorem}
\label{th:dual}
Let $\rho, \nu \in \mathcal{D}(\C^n)$ and $f$ as in Theorem \ref{thm:qw}. 
If both $\rho$ and $\nu$ are positive definite then the dual value $D(\rho,\nu)$ is attained. 
\end{theorem}
\begin{proof}
The proof follows from a compactness argument. 
Given a Hermitian matrix $\A \in \C^{n \times n}$, let us denote by $\|\A\| = \sqrt{\Tr(\A^2)}$ the Frobenius norm of $\A$, by $\lambda_{\min}(\A)$ and $\lambda_{\max}(\A)$ the minimal and maximal eigenvalues of $\A$, respectively. 
Let us introduce the cone
\[
K = \left\lbrace \left(\int \x \x^* \, \di \mu, \int \y \y^* \, \di \mu\right)  : \mu\in \mathcal{M}(\X)  \right\rbrace,  
\]
whose \emph{dual cone} $K^*$ is given by
\[
K^* = \bigl\{ (\A,\B) \in \C^{n \times n} \times  \C^{n \times n}  \;:\;  \A=\A^*, \B=\B^*, \Tr(\rho \A + \nu \B) \ge 0 \;\; \forall (\rho,\nu)\in K \bigr\}.
\]
Equivalently, using the definition of $K$, one can write
\[
K^* = \{ (\A,\B) \in \C^{n \times n} \times  \C^{n \times n} : \x^* \A \x + \y^* \B \y \ge 0 \quad \forall (\x,\y) \in \X \}.
\]
Let $\varepsilon := \min\{\lambda_{\min}(\rho), \lambda_{\min}(\nu)\} > 0$. 
Since both $\rho - \varepsilon I$ and $\nu - \varepsilon I$ are positive semidefinite, there exists an atomic measure $\mu \in \mathcal{M}(\X)$ such that $\int \x \x^* \, \di \mu = \rho - \varepsilon I$ and $\int \y \y^* \, \di \mu = \nu - \varepsilon I$, i.e., $(\rho - \varepsilon I, \nu - \varepsilon I) \in K$. 
Let $(\A,\B) \in K^*$. 
Duality implies that $\Tr((\rho - \varepsilon I) \A + (\nu - \varepsilon I) \B) \ge 0$, i.e., $\Tr(\rho \A + \nu \B) \ge \varepsilon \Tr(\A + \B)$. 
From the equivalent definition of $K^*$, notice that $(\A + \B)$ must be in particular positive semidefinite, since $\x^* \A \x + \y^* \B \y \ge 0$, for all $\x$. Then one has 
\[
\|\A + \B\| \leq \sqrt{n} \lambda_{\max} (\A + \B) \leq  \sqrt{n} \Tr(\A + \B), 
\]
which yields 
$\Tr(\rho \A + \nu \B) \ge \varepsilon/ \sqrt{n} \|\A + \B\|$. 

Let us now consider a feasible pair $(\Lambda,\Gamma)$ for \eqref{eq:dualC} and assume that the associated cost function is greater than 0. 
Let $c := \varepsilon/\sqrt{n}$.
By feasibility, one has $(I-\Lambda,I-\Gamma) \in K^*$, since $\x^* (I - \Lambda) \x + \y^* (I - \Gamma) \y = 2 - \x^* \Lambda \x - \y^* \Gamma \y \ge 0 \quad \forall (\x,\y) \in \X$. 
Therefore 
\[
\Tr(\rho (I - \Lambda) + \nu (I - \Lambda)) \geq c \|2 I - \Lambda - \Gamma\|. 
\]
In addition,  
\[
\Tr(\rho (I - \Lambda) + \nu (I - \Gamma))= 2 -  \Tr(\rho \Lambda + \nu \Gamma) \leq 2. 
\]
Therefore, $\|2 I -\Lambda -\Gamma\| \leq 2/c$, thus the Frobenius norm of $\Lambda+\Gamma$ is upper bounded by $M := 2/c + 2 \sqrt{n}$, and by Cauchy-Schwartz $|\Tr(\Lambda+\Gamma)| \leq \sqrt{n} M =: M' > 0$. 

Note that the tuple $(\Lambda + \alpha I, \Gamma - \alpha I)$ is also feasible for any $\alpha \in \R$ with the same objective value. 
Indeed, since $\Tr(\rho) = \Tr(\nu) = 1$, one has 
\[\Tr ((\rho\Lambda + \alpha I) + (\nu \Gamma - \alpha I)) = \Tr (\rho \Lambda + \nu \Gamma) + \alpha \Tr (\rho - \nu) = \Tr (\rho \Lambda + \nu \Gamma).\] 
In addition one has for all $(\x,\y) \in \X$
\[\x^* (\Lambda + \alpha I) \x + \y^* (\Gamma - \alpha I) \y = \x^* \Lambda \x + \y^* \Gamma \y + \alpha (\|\x\|^2 - \|\y\|^2) = \x^* \Lambda \x + \y^* \Gamma \y.\] 
Therefore, let us also assume without loss of generality that $\Tr(\Lambda)=0$, which shall allow one to bound all eigenvalues of $\Lambda$ and $\Gamma$. 

One has $-M' \leq \Tr(\Gamma) = \Tr(\Lambda+\Gamma) \leq 0$. 
For the latter inequality we used that for all $\x$ one has $f(\x,\overline{\x},\x,\overline{\x})=0$, so $\x (\Lambda + \Gamma) \x^* \leq 0$. 
In addition one has 
\begin{align*}
f(\x,\overline{\x},\y,\overline{\y})= \Tr[(\x \x^* - \y \y^*) \overline{(\x \x^* - \y \y^*)}] & = \left|\sum_{i=1}^n x_i^2\right|^2 + \left|\sum_{i=1}^n y_i^2\right|^2 - 2 \left|\sum_{i=1}^n x_i y_i\right|^2 \\
& \leq \left(\sum_{i=1}^n |x_i|^2\right)^2 + \left(\sum_{i=1}^n |y_i|^2\right)^2,
\end{align*}
which implies that $\x \Lambda \x^* + \y \Gamma \y^* \leq 2$, for all $(\x,\y) \in \X$, so in particular $ \lambda_{\max}(\Lambda) + \lambda_{\max} (\Gamma) \leq 2$. 
Since  $\Tr(\Lambda) = 0$, one has $\lambda_{\min}(\Lambda) \geq -(n-1) \lambda_{\max}(\Lambda)$.
Combining this with $\lambda_{\max}(\Gamma) \leq -\lambda_{\min} (\Lambda)$ and $ \lambda_{\max}(\Lambda) + \lambda_{\max} (\Gamma) \leq 2 $ yields $\lambda_{\max}(\Gamma) \leq 2 (n-1)/n$. 
In addition, 
\[
\lambda_{\min}(\Gamma) \geq -(n-1) \lambda_{\max}(\Gamma) + \Tr(\Gamma) \geq - 2 (n-1)^2/n - M'. 
\]
Eventually, one has $\lambda_{\max}(\Lambda) \leq -\lambda_{\min}(\Gamma) \leq  2 (n-1)^2/n + M'$ and $\lambda_{\min}(\Lambda) \geq - (n-1) \lambda_{\max}(\Lambda) \geq -2 (n-1)^3/n - (n-1) M'$. 
Consequently, all eigenvalues of such feasible matrix tuples are bounded in absolute value, which implies that all their entries' absolute values are also bounded. 

The feasible set  \eqref{eq:dualC} is the intersection of infinitely many polynomial superlevel sets. 
Therefore it is closed, which concludes the desired dual attainment proof. 
\end{proof}
}
\if{
\begin{remark}
\label{rk:dualnotatt}
We emphasize that the dual program \eqref{eq:dualC} might not be attained since the feasible set of \eqref{eq:dualC} is not compact. 
For any feasible solution $(\Lambda, \Gamma)$ of \eqref{eq:dualC}, the tuple $(\Lambda + \alpha I, \Gamma - \alpha I)$ is also feasible for any arbitrary large $\alpha \in \R$ with the same objective value. 
Indeed, since $\Tr(\rho) = \Tr(\nu) = 1$, one has $\Tr ((\rho\Lambda + \alpha I) + (\nu \Gamma - \alpha I)) = \Tr (\rho \Lambda + \nu \Gamma) + \alpha \Tr (\rho - \nu) = \Tr (\rho \Lambda + \nu \Gamma)$. 
In addition, $\x^* (\Lambda + \alpha I) \x + \y^* (\Gamma - \alpha I) \y = \x^* \Lambda \x + \y^* \Gamma \y + \alpha (\|\x\|^2 - \|\y\|^2) = \x^* \Lambda \x + \y^* \Gamma \y $, for all $(x,y) \in \X$. 
The above observation implies that the feasible set of \eqref{eq:dualC} is not bounded. 
\end{remark}
}\fi

\subsection{A hierarchy of moment-SOS relaxations}
\label{sec:sos_hierarchy}
We now approximate the problem defined in \eqref{eq:measC} using finite-dimensional semidefinite programs. 
We consider a hierarchy of moment-SOS relaxations of the GMP instance \eqref{eq:measC}. 
\revise{Note that the minimal relaxation order should be $t=2$ since the objective function has degree $4$. }
The hierarchy of moment relaxations, indexed by $t \geq 2$, is
%Instead of optimizing over measures with fixed, given marginals, we optimize over linear functionals $L:\C[\x,\overline{\x},\y,\overline{\y}]_{2t} \to \C $ acting on complex polynomials of bounded degree $2t$.
%
\begin{equation}
\label{eq:momC}
\begin{aligned}
W^2_2(\rho,\sigma)_t := \inf_{L} \quad  & L(f)   \\	
\text{s.t.}
\quad & L: \C[\x,\overline{\x},\y,\overline{\y}]_{2t} \to \C  \text{ Hermitian linear}, \\
\quad & L(\x \x^*) = \rho, \\
\quad & L(\y \y^*) = \nu, \\
\quad & L \geq 0 \text{ on } \cQ(\X)_{2t}.
\end{aligned}
\end{equation}
The dual hierarchy of SOS programs, indexed by $t \geq 2$, is
\begin{equation}
\label{eq:sosC}
\begin{aligned}
D(\rho,\nu)_t = \sup_{\Lambda,\Gamma \in \C^{n \times n}} \quad & \Tr (\rho \Lambda + \nu \Gamma) \\
\mathrm{s.t.} \quad & f - \x^* \Lambda \x - \y^* \Gamma \y \in \cQ(\X)_{2t}, \\
& \Lambda^* = \Lambda, \quad \Gamma^* = \Gamma.
\end{aligned}
\end{equation}
For $t=\infty$, $W_2^2(\rho, \nu)_{\infty}$ involves linear functionals acting on the full polynomial space $\C[\x,\overline{\x},\y,\overline{\y}]$. 
Note that for all integer $t \geq 2$, the moment program \eqref{eq:momC} is a relaxation of \eqref{eq:measC}, thus $W_2^2(\rho, \nu)_t \leq W_2^2(\rho, \nu)$. 
In addition, one has $W_2^2(\rho, \nu)_t \geq D(\rho,\nu)_t$ by weak duality. 
The next result states that strong duality holds and that the moment-SOS hierarchy converges to the order 2 quantum Wasserstein distance. 
\begin{theorem}
Let $ \rho, \nu \in \mathcal{D}(\mathbb{C}^n)$. 
%, and let $ W_2^2(\rho, \sigma)_t $ denote moment-SOS relaxation of the order-$2$  quantum Wasserstein distance at level $ t $, as defined in \eqref{eq:momC}. 
For all integer $t \geq 2$, there is no duality gap between \eqref{eq:momC} and \eqref{eq:sosC}, i.e., $W_2^2(\rho, \nu)_t = D(\rho,\nu)_t$. 
In addition,
\[
\lim_{t \to \infty} W_2^2(\rho, \nu)_t = W_2^2(\rho,\nu)_{\infty} = W_2^2(\rho, \nu).
\]
\end{theorem}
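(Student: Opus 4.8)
The plan is to establish the two claims—no duality gap at each finite level, and convergence of the hierarchy—separately, in that order.

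\medskip
\textbf{Step 1: No duality gap at level $t$.} First I would apply the same Barvinok-type conic duality argument used in the proof of Theorem~\ref{thm:qw}, but now at the level of truncated moment sequences rather than measures. Concretely, one identifies a Hermitian linear functional $L$ on $\C[\x,\overline{\x},\y,\overline{\y}]_{2t}$ nonnegative on $\cQ(\X)_{2t}$ with a (finite-dimensional) moment vector, and the equality constraints $L(\x\x^*)=\rho$, $L(\y\y^*)=\nu$ define an affine map $A$ into $(\C^{n\times n})^2$; the objective is the linear functional $L\mapsto L(f)$. To invoke the closedness hypothesis of \cite[Theorem 7.2, Chapter 4]{Barvinok2002ACI}, I need the image $\{(L(f), AL) : L \geq 0 \text{ on } \cQ(\X)_{2t}\}$ to be closed. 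The key point is that any feasible $L$ necessarily satisfies $L(1) = L(\Tr \x\x^*) = \Tr\rho = 1$ (using that $\|\x\|^2 - 1 \in \cQ(\X)_2$ and that $L$ is nonnegative there), so by Lemma~\ref{lem:moment_bound} the feasible moment vectors lie in a bounded set; since $\cQ(\X)_{2t}$ is finitely generated and $L$ nonnegative on it defines a closed cone, the feasible set of \eqref{eq:momC} is compact, hence its image under the continuous (finite-dimensional, linear) maps $L\mapsto L(f)$ and $L\mapsto AL$ is compact, in particular closed. This yields $W_2^2(\rho,\nu)_t = D(\rho,\nu)_t$, and along the way shows the inf in \eqref{eq:momC} is attained.

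\medskip
\textbf{Step 2: Monotonicity and the limit.} The sequence $(W_2^2(\rho,\nu)_t)_{t\geq 2}$ is nondecreasing (a functional feasible at level $t+1$ restricts to one feasible at level $t$) and bounded above by $W_2^2(\rho,\nu) = P(\rho,\nu)$, hence converges to some $\ell \leq W_2^2(\rho,\nu)$. To show $\ell = W_2^2(\rho,\nu)$, I would take for each $t$ an optimal $L_t$ for \eqref{eq:momC} (attained by Step 1) with $L_t(1)=1$, extend it arbitrarily to $\C[\x,\overline{\x},\y,\overline{\y}]$, and apply the second part of Lemma~\ref{lem:moment_bound}: since $\sup_t L_t(1) = 1 < \infty$, a subsequence converges pointwise to a Hermitian linear functional $L_\infty$ on the full polynomial ring. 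This $L_\infty$ is nonnegative on $\cQ(\X)$ (any element of $\cQ(\X)$ lies in some $\cQ(\X)_{2t}$, and nonnegativity passes to the pointwise limit along the subsequence), and $\cQ(\X)$ is Archimedean because $1 - \|\x\|^2$ and $1 - \|\y\|^2$ together with their negatives sum to give the needed ball constraint. By Theorem~\ref{thm:measure_representation}(i), $L_\infty$ has a representing measure $\mu$ on $\X$; passing to the limit in the (finitely many, fixed-degree) equality constraints gives $\int \x\x^*\,d\mu = \rho$, $\int \y\y^*\,d\mu=\nu$, so $\mu$ is feasible for \eqref{eq:measC}, and $\int f\,d\mu = L_\infty(f) = \lim_t L_t(f) = \ell$. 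Hence $\ell \geq P(\rho,\nu) = W_2^2(\rho,\nu)$, which forces equality; the same argument with $W_2^2(\rho,\nu)_\infty$ in place of the limit (it lies between each $W_2^2(\rho,\nu)_t$ and $W_2^2(\rho,\nu)$) gives $W_2^2(\rho,\nu)_\infty = W_2^2(\rho,\nu)$ as well.

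\medskip
\textbf{Main obstacle.} The routine parts are monotonicity and the compactness extraction. The step requiring the most care is verifying the closedness/boundedness needed for Step 1: one must check that imposing only the two matrix-valued marginal constraints (rather than $L(1)=1$ directly) still pins down $L(1)=1$ via the sphere constraint inside $\cQ(\X)_2$, and that this, combined with $\cQ(\X)_{2t}$ being Archimedean at the truncated level (so Lemma~\ref{lem:moment_bound} applies with $R=1$), genuinely compactifies the feasible set so that \cite[Theorem 7.2, Chapter 4]{Barvinok2002ACI} can be invoked exactly as in Theorem~\ref{thm:qw}. I would also double-check that the dual \eqref{eq:sosC} is feasible (e.g. $\Lambda=\Gamma=0$ works since $f\geq 0$ on $\X$ and, for $t$ large enough, $f \in \cQ(\X)_{2t}$ by Putinar—though feasibility of the primal plus the closedness argument already suffices for zero duality gap).
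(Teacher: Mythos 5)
Your proposal is correct and follows essentially the same route as the paper: both hinge on Lemma~\ref{lem:moment_bound} (using $2-\|\x\|^2-\|\y\|^2\in\cQ(\X)_2$ and $L(1)=L(\Tr(\x\x^*))=\Tr(\rho)=1$) to bound the feasible truncated functionals, and on extracting a pointwise-convergent subsequence whose limit is handled via the Archimedean property and Theorem~\ref{thm:measure_representation}. The only cosmetic differences are that the paper certifies the zero duality gap by citing Trnovska's criterion (nonempty, bounded optimal set) rather than Barvinok's closed-image condition, and closes the convergence argument by applying Theorem~\ref{thm:measure_representation}(ii) with $k=4$ to produce a finite atomic measure, i.e.\ an explicit transport plan, whereas you invoke part (i) together with the already-established identity $P(\rho,\nu)=W_2^2(\rho,\nu)$; both variants are valid.
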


\begin{proof}
Let us fix an integer $t \geq 2$, and let us denote by $\cR_t$ and $\cR_t^{\opt}$ the feasible and optimal solution sets of \eqref{eq:momC}, respectively. 
We prove that there is no duality gap by using the (complex analog) result of \cite{trnovska2005strong}. 
To do so, we show that $\cR_t^{\opt}$ is nonempty and bounded. 

As already mentioned in the proof of Theorem \ref{thm:qw}, the set $Q(\rho, \nu)$ is nonempty: it contains a quantum transport plan $\{(\omega_{\ell},u_{\ell},v_{\ell})\}_{\ell}$ such that $\mu = \sum_{\ell} \omega_{\ell} \delta_{(u_{\ell},v_{\ell})}$ is feasible for \eqref{eq:measC}. 
The corresponding functional $L = \sum_{\ell} \omega_{\ell} L_{(u_{\ell},v_{\ell})} $ belongs to $\cR_t$, thus $\cR_t$ and $\cR_t^{\opt}$ are both nonempty. 

To prove that $\cR_t^{\opt}$ is bounded, we rely on the first result of Lemma \ref{lem:moment_bound}. 
Let $L_t \in \cR_t$. 
Note that $L_t(1) = L_t(\Tr(\x \x^*)) =  \Tr(\rho) = 1$. 
The assumptions of Lemma \ref{lem:moment_bound} are satisfied since $2 - \revise{\|\x\|^2} - \|\y\|^2 \in \cQ(\X)_2$ and $L_t$ is nonnegative on $\cQ(\X)_{2t}$. 
Then we have $|L_t(w)| \leq  2^{|w|/2}$ for all $ w \in [\x,\overline{\x},\y,\overline{\y}]_{2t}$. 
Thus the sets $\cR_t$ and $\cR_t^{\opt}$ are both bounded. 

The first claim then follows from the second sufficient condition stated in \cite[Corollary 1]{trnovska2005strong}. 

We now prove the second claim. 
As $\sup_{t \in \NN} L_t(1) = 1 < \infty$, the second result from Lemma \ref{lem:moment_bound} implies that there exists a Hermitian linear functional $L : \C[\x,\overline{\x},\y,\overline{\y}] \to \C$ which is the limit of a subsequence of $\{L_t\}_t$. 
Then $L$ is feasible for $W_2^2(\rho,\nu)_{\infty}$, which in turn implies that  $W_2^2(\rho,\nu)_{\infty} \leq L(f) = \lim_{t \to \infty} L_t(f) = \lim_{t \to \infty} W_2^2(\rho,\nu)_{t}$. 
Next, we prove that  $W_2^2(\rho, \nu) \leq W_2^2(\rho,\nu)_{\infty}$. 
Assume that $L$ is an optimal solution to $W_2^2(\rho,\nu)_{\infty}$. 
Since $\cQ(\X)$ is Archimedean and $L$ is nonnegative on $\cQ(\X)$, one can apply Theorem \ref{thm:measure_representation} with $k=4$ to conclude that the restriction of $L$ to $\C[\x,\overline{\x},\y,\overline{\y}]_4$ is a convex combination of evaluation functionals at points in $\X$. 
Namely, there exist scalars  $\omega_{\ell} > 0$ with $\sum_{\ell} \omega_{\ell} = 1$ and $(u_{\ell},v_{\ell}) \in \X$ such that $L(p) = \sum_{\ell} \omega_{\ell} p(u_{\ell}, \overline{u_{\ell}}, v_{\ell},  \overline{v_{\ell}} )$, for all $p \in \C[\x,\overline{\x},\y,\overline{\y}]_4$. 
In particular, one has $L(\x \x^*)= \sum_{\ell} \omega_{\ell} u_{\ell} u_{\ell}^* = \rho$,  $L(\y \y^*)= \sum_{\ell} \omega_{\ell} v_{\ell} v_{\ell}^* = \nu$ and $L(f) = \sum_{\ell} \omega_{\ell} \Tr[(u_{\ell} u_{\ell}^* - v_{\ell} v_{\ell}^*)(\overline{u_{\ell} u_{\ell}^* - v_{\ell} v_{\ell}^*})]$. 

This implies $\{(\omega_{\ell},u_{\ell},v_{\ell})\}_{\ell} \in Q(\rho,\nu)$ and  $W_2^2(\rho, \nu) \leq W_2^2(\rho,\nu)_{\infty}$, yielding the desired result. 
\end{proof}
\if{
\begin{remark}
Similarly to Remark \ref{rk:dualnotatt}, the dual SOS program \eqref{eq:sosC} is not attained. 
Indeed, for any feasible solution $(\Lambda, \Gamma)$ of \eqref{eq:sosC}, the tuple $(\Lambda + \alpha I, \Gamma - \alpha I)$ is also feasible for any arbitrary large $\alpha \in \R$ since $\alpha (\|x\|^2 - \|y\|^2) \in \cQ(X)_{2t}$ for any $t \in \NN$. 
\end{remark}
}\fi
\section{Concluding numerical experiments and perspectives}
\label{sec:numerics}
We first apply the moment-SOS hierarchy provided in Section \ref{sec:sos_hierarchy} on the single qubit systems ($n=2$) from Example~\ref{ex:pure_pure} and Example \ref{ex:mixed_mixed}. 
The lower bounds of the order 2 quantum Wasserstein distance are obtained by computing the values of the SOS program \eqref{eq:sosC} at the minimal relaxation order $t=2$. All semidefinite relaxations are modeled thanks to the \textsc{TSSOS} library \cite{magron2021tssos} (see also \cite[Appendix B]{magron2023sparse}),  implemented in \textsc{Julia} \cite{bezanson2017julia}. 
All results can be reproduced with online scripts\footnote{\url{https://wangjie212.github.io/TSSOS/dev/sos/\#Order-2-quantum-Wasserstein-distances}}. 
%All the scripts are written in \textsc{Julia}\, $1.10$ and the semidefinite  relaxations are solved with \textsc{MOSEK}\, 10.1, through \textsc{JuMP}\, 1.20 and \textsc{TSSOS}\, 0.9 interface. 
%
\titlespacing*{\paragraph}{0pt}{0pt}{1em}
\paragraph{Example~\ref{ex:pure_pure} continued.} 
Here, we consider the pair of pure states
 $\rho = \begin{pmatrix}
        1 & 0\\
        0 & 0
        \end{pmatrix}$ and $\nu=\begin{pmatrix}
        1 & 1\\
        1 & 1
        \end{pmatrix}$. 
For this pair an analytic calculation gives
\(W_2(\rho,\nu)=1\).
The SOS program \eqref{eq:sosC} at order~$t=2$ produces $W_{2}^2(\rho,\nu)_2 \simeq 1$ (up to 8 significant digits), thus certifying numerically that the relaxation is exact at the minimal relaxation order.
\paragraph{Example~\ref{ex:mixed_mixed} continued.} 
Here we consider the pair of mixed states 
$\rho = \begin{pmatrix}
        \frac{3}{4} & 0\\
        0 & \frac{1}{4}
        \end{pmatrix}$ and $\nu=\begin{pmatrix}
        \frac{1}{2} & 0\\
        0 & \frac{1}{2}
        \end{pmatrix}$.
The SOS programs \eqref{eq:sosC} at order~$t=2,3,4$  produce \(W_{2}^2(\rho,\nu)_t = 0.13397\), so one most likely expects the distance to be equal to $\sqrt{0.13397} \simeq 0.36601$. 
\paragraph{Example maximizing the Wasserstein distance.} 
Let us notice that 
\begin{align*}
f(\x,\overline{\x},\y,\overline{\y})= \Tr[(\x \x^* - \y \y^*) \overline{(\x \x^* - \y \y^*)}] & = \left|\sum_{i=1}^n x_i^2\right|^2 + \left|\sum_{i=1}^n y_i^2\right|^2 - 2 \left|\sum_{i=1}^n x_i y_i\right|^2 \\
& \leq \left(\sum_{i=1}^n |x_i|^2\right)^2 + \left(\sum_{i=1}^n |y_i|^2\right)^2 = 2,
\end{align*}
for all $(\x,\y) \in \X$. 
Therefore the objective function of \eqref{eq:measC} is upper bounded by $2$, and so is $W_2(\rho,\nu)^2$. 
With the pair of states $\rho = \frac{1}{2} \begin{pmatrix}
        1 & -1\\
        -1 & 1
        \end{pmatrix}$ and $\nu= \frac{1}{2} \begin{pmatrix}
        1 & 1\\
        1 & 1
        \end{pmatrix}$
we obtain $W_2^2(\rho,\nu)_2 \simeq 2$ (up to 8 significant digits), showing that the upper bound is actually saturated. 
\paragraph{Perspectives.}
A first topic of further investigation would be to reduce the computational effort required to solve the pair of semidefinite programs  \eqref{eq:momC}-\eqref{eq:sosC} at a fixed relaxation order $t$. 
For this, one could rely on the alternative  formulations proposed in \cite[Section 5.2]{gribling2022bounding} that allow us to block-diagonalize the matrix variable in various ways; see in particular \cite[Eq.~(61) and Lemma 22]{gribling2022bounding}. 
\revise{
In addition to this computational effort reduction, another practical goal is the extraction of optimal quantum transport plans. 
When a pseudo-moment matrix obtained at a given relaxation order satisfies the so-called \textit{flat extension} condition, one can conclude that the corresponding hierarchy has finite convergence. 
Under this condition, the algorithm from \cite{henrion2005detecting} outputs the atomic decomposition of the optimal measure. 
While this condition did not hold in our numerical experiments, one could derive another adapted one to moment relaxations with block-diagonalized matrix variables. 
}

Another future research topic is to analyze the convergence rate of our hierarchy. 
Convergence rates in $O(1/t^2)$ have been obtained for the DPS hierarchy in \cite{navascues2009power} and \cite{fang2021sum} by means of a quantum de Finetti theorem and the polynomial kernel method, respectively. 
In a follow-up paper \cite{magron2025convergence}, the second author has used such polynomial kernels to obtain a similar convergence rate for the order 2 quantum Wasserstein distance. 
It would be interesting to either retrieve or improve this rate by using a quantum de Finetti theorem. 

\revise{This note focused solely on the order $p=2$ quantum Wasserstein distance. 
Other cases of special interest include the infinite-order quantum Wasserstein distance ($p=\infty$). 
An exciting research track would be to identify for which values of $p$ one could  characterize the corresponding quantum Wasserstein distances as linear or more generally convex moment problems. 
Additionally, the recent framework \cite{klep2025sums} would allow us to also consider quantum transport problems cast as nonlinear polynomial moment problems. 
}
\paragraph{Acknowledgments.} 
\revise{The authors are grateful to the two anonymous referees for their constructive feedback. }
We thank Emily Beatty, Jonas Britz and Monique Laurent for fruitful discussions. 
Part of this research was performed while the first author was visiting the Institute for Pure and Applied Mathematics (IPAM), which is supported by the National Science Foundation (Grant No. DMS-1925919). 
This work was also supported by the European Union's HORIZON--MSCA-2023-DN-JD programme under the Horizon Europe (HORIZON) Marie Sklodowska-Curie Actions, grant agreement 101120296 (TENORS). 
\paragraph{Data availability statement.} We do not analyse or generate any datasets, because our work proceeds within a theoretical and mathematical approach. 
%
%\bibliographystyle{alpha}

%\bibliography{ref.bib}

\begin{thebibliography}{FECidZ22}

\bibitem[Ash14]{ash2014measure}
Robert~B Ash.
\newblock {\em Measure, integration, and functional analysis}.
\newblock Academic Press, 2014.

\bibitem[Bar02]{Barvinok2002ACI}
A.I. Barvinok.
\newblock A course in convexity.
\newblock In {\em Graduate Studies in Mathematics}, 2002.

\bibitem[BB00]{benamou2000computational}
Jean-David Benamou and Yann Brenier.
\newblock A computational fluid mechanics solution to the monge--kantorovich
  mass transfer problem.
\newblock {\em Numerische Mathematik}, 84(3):375--393, 2000.

\bibitem[Bea25]{beatty2025wassersteindistancesquantumstructures}
Emily Beatty.
\newblock Wasserstein distances on quantum structures: an overview.
\newblock {\em arXiv preprint arXiv:2506.09794}, 2025.

\bibitem[BEKS17]{bezanson2017julia}
Jeff Bezanson, Alan Edelman, Stefan Karpinski, and Viral~B Shah.
\newblock Julia: A fresh approach to numerical computing.
\newblock {\em SIAM review}, 59(1):65--98, 2017.

\bibitem[BF24]{beatty2024}
Emily Beatty and Daniel~Stilck França.
\newblock {Order $p$ quantum Wasserstein distances from couplings}.
\newblock {\em arXiv preprint arXiv:2402.16477}, 2024.

\bibitem[CGMP24]{caputo2024quantum}
Emanuele Caputo, Augusto Gerolin, Nataliia Monina, and Lorenzo Portinale.
\newblock Quantum optimal transport with convex regularization.
\newblock {\em arXiv preprint arXiv:2409.03698}, 2024.

\bibitem[CM17]{CarlenMaas2017}
Eric~A. Carlen and Jan Maas.
\newblock Gradient flow and entropy inequalities for quantum markov semigroups
  with detailed balance.
\newblock {\em Journal of Functional Analysis}, 273(5):1810--1869, 2017.

\bibitem[Cut13]{cuturi2013sinkhorn}
Marco Cuturi.
\newblock Sinkhorn distances: Lightspeed computation of optimal transport.
\newblock In {\em Advances in Neural Information Processing Systems (NeurIPS)},
  pages 2292--2300, 2013.

\bibitem[DPS02]{doherty2002distinguishing}
Andrew~C Doherty, Pablo~A Parrilo, and Federico~M Spedalieri.
\newblock Distinguishing separable and entangled states.
\newblock {\em Physical Review Letters}, 88(18):187904, 2002.

\bibitem[DPT21]{DepalmaTrevisan2021}
Giacomo De~Palma and Dario Trevisan.
\newblock Quantum optimal transport with quantum channels.
\newblock {\em Ann. Henri Poincare}, 22(10):3199--3234, October 2021.

\bibitem[FECidZ22]{FriedlandCole2022}
Shmuel Friedland, Micha\l{} Eckstein, Sam Cole, and Karol \ifmmode~\dot{Z}\else
  \.{Z}\fi{}yczkowski.
\newblock Quantum monge-kantorovich problem and transport distance between
  density matrices.
\newblock {\em Phys. Rev. Lett.}, 129:110402, Sep 2022.

\bibitem[FF21]{fang2021sum}
Kun Fang and Hamza Fawzi.
\newblock The sum-of-squares hierarchy on the sphere and applications in
  quantum information theory.
\newblock {\em Mathematical Programming}, 190(1):331--360, 2021.

\bibitem[FGP23]{feliciangeli2023non}
Dario Feliciangeli, Augusto Gerolin, and Lorenzo Portinale.
\newblock A non-commutative entropic optimal transport approach to quantum
  composite systems at positive temperature.
\newblock {\em Journal of Functional Analysis}, 285(4):109963, 2023.

\bibitem[GLS22]{gribling2022bounding}
Sander Gribling, Monique Laurent, and Andries Steenkamp.
\newblock Bounding the separable rank via polynomial optimization.
\newblock {\em Linear Algebra and its Applications}, 648:1--55, 2022.

\bibitem[GM23]{gerolin2023non}
Augusto Gerolin and Nataliia Monina.
\newblock Non-commutative optimal transport for semi-definite positive
  matrices.
\newblock {\em arXiv preprint arXiv:2309.04846}, 2023.

\bibitem[GM25]{gamertsfelder2025effective}
Lucas Gamertsfelder and Bernard Mourrain.
\newblock The effective generalized moment problem.
\newblock {\em arXiv preprint arXiv:2501.09385}, 2025.

\bibitem[HL05]{henrion2005detecting}
Didier Henrion and Jean-Bernard Lasserre.
\newblock Detecting global optimality and extracting solutions in gloptipoly.
\newblock In {\em Positive polynomials in control}, pages 293--310. Springer,
  2005.

\bibitem[Kan42]{kantorovich1942}
Leonid~V. Kantorovich.
\newblock On the transfer of masses.
\newblock {\em Doklady Akademii Nauk}, 37:199--201, 1942.
\newblock English translation: \emph{Management Science}, 1960.

\bibitem[KMV25]{klep2025sums}
Igor Klep, Victor Magron, and Jurij Vol{\v{c}}i{\v{c}}.
\newblock Sums of squares certificates for polynomial moment inequalities.
\newblock {\em Foundations of Computational Mathematics}, pages 1--43, 2025.

\bibitem[Las01]{lasserre2001global}
Jean~B Lasserre.
\newblock Global optimization with polynomials and the problem of moments.
\newblock {\em SIAM Journal on optimization}, 11(3):796--817, 2001.

\bibitem[Las06]{lasserre2006convergent}
Jean~B Lasserre.
\newblock {Convergent SDP-relaxations in polynomial optimization with
  sparsity}.
\newblock {\em SIAM Journal on optimization}, 17(3):822--843, 2006.

\bibitem[Las08]{lasserre2008semidefinite}
Jean~B Lasserre.
\newblock A semidefinite programming approach to the generalized problem of
  moments.
\newblock {\em Mathematical Programming}, 112(1):65--92, 2008.

\bibitem[Las09]{lasserre2009moments}
Jean~Bernard Lasserre.
\newblock {\em Moments, positive polynomials and their applications}, volume~1.
\newblock World Scientific, 2009.

\bibitem[Mag25]{magron2025convergence}
Victor Magron.
\newblock Convergence rates for polynomial optimization on set products.
\newblock {\em arXiv preprint arXiv:2505.18580}, 2025.

\bibitem[MW21]{magron2021tssos}
Victor Magron and Jie Wang.
\newblock {TSSOS: a Julia library to exploit sparsity for large-scale
  polynomial optimization}.
\newblock {\em arXiv preprint arXiv:2103.00915}, 2021.

\bibitem[MW23]{magron2023sparse}
Victor Magron and Jie Wang.
\newblock {\em Sparse polynomial optimization: theory and practice}.
\newblock World Scientific, 2023.

\bibitem[NBA21]{navascues2021entanglement}
Miguel Navascu{\'e}s, Flavio Baccari, and Antonio Acin.
\newblock Entanglement marginal problems.
\newblock {\em Quantum}, 5:589, 2021.

\bibitem[NOP09]{navascues2009power}
Miguel Navascu{\'e}s, Masaki Owari, and Martin~B Plenio.
\newblock Power of symmetric extensions for entanglement detection.
\newblock {\em Physical Review A—Atomic, Molecular, and Optical Physics},
  80(5):052306, 2009.

\bibitem[PC{\etalchar{+}}19]{peyre2019computational}
Gabriel Peyr{\'e}, Marco Cuturi, et~al.
\newblock {Computational optimal transport: With applications to data science}.
\newblock {\em Foundations and Trends{\textregistered} in Machine Learning},
  11(5-6):355--607, 2019.

\bibitem[Put93]{putinar1993}
Mihai Putinar.
\newblock Positive polynomials on compact semi-algebraic sets.
\newblock {\em Indiana University Mathematics Journal}, 42(3):969--984, 1993.

\bibitem[San15]{santambrogio2015ot}
Filippo Santambrogio.
\newblock {\em Optimal Transport for Applied Mathematicians}, volume~87 of {\em
  Progress in Nonlinear Differential Equations and Their Applications}.
\newblock Birkh{\"a}user, 2015.

\bibitem[SdGP{\etalchar{+}}15]{solomon2015convolutional}
Justin Solomon, Fernando de~Goes, Gabriel Peyr{\'e}, Marco Cuturi, Adrian
  Butscher, Andy Nguyen, Tao Du, and Leonidas Guibas.
\newblock Convolutional wasserstein distances: Efficient optimal transportation
  on geometric domains.
\newblock {\em ACM Transactions on Graphics}, 34(4):66:1--66:11, 2015.

\bibitem[Tch57]{tchakaloff1957cubature}
V.~Tchakaloff.
\newblock Formules de cubature mécanique à coefficients non négatifs.
\newblock {\em Bulletin des Sciences Mathématiques}, 81:123--134, 1957.

\bibitem[Trn05]{trnovska2005strong}
Maria Trnovska.
\newblock Strong duality conditions in semidefinite programming.
\newblock {\em Journal of Electrical Engineering}, 56(12):1--5, 2005.

\bibitem[VB96]{vandenberghe1996semidefinite}
Lieven Vandenberghe and Stephen Boyd.
\newblock Semidefinite programming.
\newblock {\em SIAM review}, 38(1):49--95, 1996.

\bibitem[Vil08]{villani2008optimal}
C{\'e}dric Villani.
\newblock {\em Optimal Transport: Old and New}, volume 338 of {\em Grundlehren
  der mathematischen Wissenschaften}.
\newblock Springer Science \& Business Media, 2008.

\bibitem[ZYYY22]{nogoresultZhou}
Li~Zhou, Nengkun Yu, Shenggang Ying, and Mingsheng Ying.
\newblock Quantum earth mover’s distance, a no-go quantum
  kantorovich–rubinstein theorem, and quantum marginal problem.
\newblock {\em Journal of Mathematical Physics}, 63(10):102201, 10 2022.

\end{thebibliography}
\newcommand{\etalchar}[1]{$^{#1}$}

\end{document}